\numberwithin{equation}{section}  \makeatletter\@addtoreset{equation}{section}
\newtheorem {theorem}{Theorem}[section]         \newtheorem {lemma}[theorem]{Lemma}     \newtheorem {definition}[theorem]{Definition}
     \newtheorem {remark}[theorem]{Remark}   
       \newtheorem {proposition}[theorem]{Proposition}
\newcommand{\C}{\mathbb C}    \newcommand{\R}{\mathbb R}   \newcommand{\Z}{\mathbb Z} 	
\newcommand{\V}{\mathbb V}
  \newcommand{\scal}[1]{\left<#1\right>}
\newcommand{\FHg}{ \mathcal{F}^{2,\nu, H}_{\Gamma, \chi} }
\newcommand{\FHgr}{ \mathcal{F}^{2,\nu,H}_{\Gamma_r, \chi} }
\newcommand{\Onugr}{\mathcal{O}^{\nu,H}_{\Gamma_r, \chi}}
\newcommand{\Fnugr}{\mathcal{F}^{2,\nu,H}_{\Gamma_r, \chi}}
\newcommand{\Fnugo}{\mathcal{F}^{2,\nu,H}_{\Gamma_0, \chi}}
\newcommand{\Fnugd}{\mathcal{F}^{2, \nu,H}_{\Gamma_{2g}, \chi}}
\newcommand{\Fnugu}{\mathcal{F}^{2, \nu,H}_{\Gamma_{1}, \chi}}
\newcommand{\psiNuU}{e^{\frac \nu 2 B(z,z) } }
\newcommand{\psiAlphaU}{e^{ 2\pi i \alpha z} }
\newcommand{\psiNuAlphaU}{e^{\frac \nu 2 B(z,z) + 2\pi i \alpha z} }
\newcommand{\psiNuAlphaUn}{e^{\frac \nu 2 B(z,z) + 2\pi i (\alpha+n) z} }
\newcommand{\psiNu}{\psi_\nu }
\newcommand{\psiAlpha}{\varphi_\alpha }
\newcommand{\psiNuAlpha}{\psi_{\nu,\alpha} }
\newcommand{\normnu}[1]{\left\Vert#1\right\Vert_{\nu,\Gamma,H}}
\newcommand{\normnur}[1]{\left\Vert#1\right\Vert_{\nu,\Gamma_r,H}}
\newcommand{\scalnuer}[1]{\left<#1\right>_{\nu,\Gamma_r}}
\newcommand{\mes}{d\lambda}
\newcommand{\nuw}{\widetilde{\nu}}
\newcommand{\WrR}{\mathbb{W}_\R^r}
\newcommand{\WrRpe}{(\mathbb{W}_\R^r)^{\perp_E}}
\newcommand{\WrC}{\mathbb{W}_\C^r}
\newcommand{\WrCp}{(\mathbb{W}_\C^r)^\perp}
\newcommand{\VgC}{\C^g}
\newcommand{\enk}{ e^{\alpha,\nu}_{n,k}}
\newcommand{\enkp}{ e^{\alpha,\nu}_{n',k'}}
\begin{document}
\date{\today}
\title[]{On holomorphic theta functions associated to rank $r$ isotropic discrete subgroups of a $g$-dimensional complex space}
\author{A. Ghanmi}              \email{ag@fsr.ac.ma}
\author{A. Intissar}            \email{intissar@fsr.ac.ma}
\author{M. Souid El Ainin}      \email{msouidelainin@yahoo.fr}
 \address{E.D.P. and Spectral Geometry,
          Laboratory of Analysis and Applications-URAC/03,
          Department of Mathematics, P.O. Box 1014,  Faculty of Sciences,
          Mohammed V University of Rabat, Morocco}

  \email{ag@fsr.ac.ma, intissar@fsr.ac.ma, msouidelainin@yahoo.fr}
\begin{abstract}
We are interested in the $L^2$-holomorphic automorphic functions on a $g$-dimensional complex space $\V^g_{\C}$
 endowed with a positive definite hermitian form and associated to isotropic discrete subgroups $\Gamma$ of rank $2\leq r \leq g$. 
 We show that they form an infinite reproducing kernel Hilbert space which looks like a tensor product of a theta Fock-Bargmann space 
 on $\V^{r}_{\C}=Span_{\C}(\Gamma)$ and the classical Fock-Bargmann space on $\V^{g-r}_{\C}$. 
 Moreover, we provide an explicit orthonormal basis using Fourier series and we give the expression 
 of its reproducing kernel function in terms of Riemann theta function of several variables with special characteristics.
\end{abstract}

\keywords {Isotropic discrete subgroup; Holomorphic theta functions; $(\Gamma, \chi)$-theta Fock-Bargmann space; Orthonormal basis; Reproducing kernel, 
Riemann theta function.}
\subjclass[2010]{Primary 32N05; 14K25, Secondary 37J05; 33E05}


\maketitle


\section{Introduction}
Let $\V=\V^g_{\C}$; $g\geq 1$, be a $g$-dimensional complex space with a positive definite hermitian form  $H(u,v)$. The real vector space underlying $\V$ carries a canonical
symplectic structure $(\V^{2g}_{\R}, E)$, when equipped with the nonsingluar alternating bilinear form $E(u,v):=\Im (H(u,v))$.
To a fixed real number $\nu>0$, given discrete subgroup $\Gamma$ of the additive group $(\V,+)$ and given mapping $\chi$ on $\Gamma$ with values in the unit circle of $\C$, we assign the functional space $\FHg(\V)$ of all holomorphic functions on $\V=\V^g_{\C}$ satisfying the functional equation
\begin{align}\label{FctEqIntr}
f(u+\gamma)= \chi(\gamma) e^{\nu H(u+\frac \gamma 2, \gamma)} f(u)
\end{align}
for every $ u\in \V$ and $\gamma\in\Gamma$, and such that
$$
\normnu{f}^2 :=\int_{\V_{\R}^{2g}/\Gamma } |f(u)|^2 e^{-\nu H(u,u)} \mes(u) <+\infty,
$$
where $\V_{\R}^{2g}/\Gamma $ denotes the orbital abelian group
of $\Gamma$ endowed with its Haar measure.
The cocompact case (i.e. $\Gamma$ is of maximal rank $r=2g$ or equivalently $\V^{2g}_\R/\Gamma$ being compact) 
is well studied in the literature \cite{Poincare16-54,Siegel1955,Baily1973} and the corresponding space $\Fnugd(\V)$ 
has a high interest not only on its own, but also in the light of
the remarkable implications for both pure mathematics and mathematical physics. It is closely connected to number 
theory and abelian varieties \cite{Shimura1968,Serre1973,BumpFriedbergHoffstein1996,PolishChuk2003}, representation 
theory \cite{GeifandPiateskii-Shapiro1963,Satake1971}, spectral analysis \cite{Iwaniec2002,GhIn2008,GhIn2012}, cryptography 
and coding theory \cite{Ritzenthaler2004,ShaskaWijesiri2008}, 
chaoticity of a shift operator \cite{InAb2013,InAb2015}  and quantum field theory \cite{Cartier1966,Fubini1991}.
Under the cocycle (Riemann-Dirac quantization (RDQ)) condition
 \begin{align}\label{RDQ-CC}
\chi(\gamma+\gamma')=\gamma(\gamma)\chi(\gamma')e^{i\nu E(\gamma, \gamma')}
 \end{align}
 for varying $ \gamma, \gamma'\in \Gamma=\Gamma_{2g}$, the space $\Fnugd(\V)$ of $(L^2,\Gamma,\chi)$-holomorphic 
 automorphic functions is of finite dimension which involves the volume of the complex torus $\V^g_{\C}/\Gamma_{2g}$. 
 Moreover, it appears as the null space of a special magnetic Laplacian $L_\nu$
acting on the whole space of $(L^2,\Gamma,\chi)$-automorphic functions (see \cite{GhIn2008} for more details).
 The spectral properties of $L_\nu$ when $\chi\equiv 1$ can be read off from the spectral analysis of the sub-Laplacian 
 $\mathcal{L}_\nu$ acting on the space of smooth $\widetilde{\Gamma}_{2g}$-periodic functions on the Heisenberg group $H^{2g+1}$,  
 where $\widetilde{\Gamma}_{2g}$ is a lattice subgroup of $H^{2g+1}$ whose projection onto $\V^g_\C$ is $\Gamma_{2g}$ 
 (see \cite{Folland2004} for details).

Looking for the non-cocompact case  (i.e. $\Gamma$ of rank $r$; $0\leq r<2g$) is therefore strongly motivated under a
lot of different viewpoints (see \cite{AbeKopfermann2001,Abe2003,Chan2013} for example and references therein).
 However, the explicit construction of the holomorphic automorphic functions using Fourier series is still far from 
 being understood in the general case of $0\leq r<2g$, except for $r=0$ and $r=1$.
Indeed, for $r=0$ we have $\Gamma_0=\{0\}$ and $\Lambda(\Gamma_0)$ is the whole $\V$. For $\chi(0)=1$, the space 
$\Fnugo(\V^g_\C)$ is nontrivial and can be identified to the usual Fock-Bargmann space consisting of all holomorphic 
functions, $f\in \mathcal{O}(\V^g_\C)$, on $\V^g_\C$ that are $e^{-\nu H(u,u)} \mes$-square integrable, 
\begin{align}\label{Bargmann}
 \mathcal{B}^{2, \nu}_H(\V^g_\C) := \mathcal{O}(\V^g_\C) \cap L^2(\V^g_\C; e^{-\nu H(u, u)} \mes).
 \end{align}
The rank one discrete subgroups has been considered and discussed recently by the authors in \cite{GhIn2012,Souid2015}.
 It is shown there that the corresponding functional space is nontrivial if and only if $\chi$ is a character.
Moreover, $\Fnugu(\V^g_\C)$ is an infinite dimensional reproducing kernel Hilbert space. Its concrete description,  
including the explicit construction of an orthonormal basis, and the explicit expression of its reproducing kernel, 
are investigated in \cite{GhIn2012} for $g=1$ and generalized to high dimension in \cite{Souid2015}.
However, for $g \geq 2$ and $2\leq r  \leq 2g-1$, it is so difficult to guarantees the existence of a nonzero holomorphic function on $\C^g$
satisfying the functional equation \eqref{FctEqIntr} and such that $\normnur{f}^2 < +\infty$, without additional assumption on $\Gamma_r$.

In the present paper, along the way of \cite{GhIn2012,Souid2015}, we extend the obtained results to high rank $1\leq r \leq g$ 
under special assumption on $\Gamma=\Gamma_r$. Mainely, we show that if the discrete subgroup $\Gamma_r$ is isotropic of rank 
$r$; $1\leq r \leq g$, with respect to the symplectic structure $(\V^{2g}_{\R}, E)$ (see Definition \ref{DefIso}), then $\FHgr(\V)$ 
is nonzero if and only if $\chi$ is a character. In this case, the space $\FHgr(\V)$ is an infinite reproducing kernel Hilbert space 
and looks like a tensor product of the theta Fock-Bargmann space on $\WrC=vect_{\C}(\Gamma_r)$ (\cite{GhIn2008}) and the classical 
Fock-Bargmann space on $
(\WrC)^{\perp_H}$.
Moreover, we provide an orthonormal basis and we explicit its reproducing kernel function in terms of the Riemann theta function of 
several variables with special characteristics. The main results for which is aimed this paper are summarized in Theorem \ref{Thmbasis} 
relevant to the description of the Hilbert space $\FHgr(\V^g_\C)$, including the construction of an orthonormal basis, and in Theorem 
\ref{ThmRKHS} relevant to giving the explicit expression of the reproducing kernel.
The crucial idea is that the complex vector space $\V=\V^g_\C$ can be endowed with a $\C$-basis whose first vectors are the generators 
of the (isotropic) discrete subgroup $\Gamma_r$. This does not work when $r > g$ for general discrete subgroup $\Gamma_r$.

The layout of the paper is as follows.
In Section 2,  we fix notations and review some elementary results on the isotropic discrete subgroups of $(\V,+)$.
The space $\FHgr(\V^g_\C)$ is introduced and studied in Section 3.
Our main results, Theorem \ref{Thmbasis} and Theorem \ref{ThmRKHS}, are proved in Section 4.


\section{Preliminaries on isotropic discrete subgroups of $(\C^g,+)$}
We consider the $g$-dimensional complex vector space $\V^g_\C$; $g\geq 1$, endowed with a positive definite hermitian form $H(u,v)$. 
The corresponding symplectic form is $E(u,v)=\Im(H(u,v))$,
where $\Im$ denotes the imaginary part of complex numbers, so that 
\begin{align} \label{HE}
H(u,v) = E(iu,v) + i E(u,v).
\end{align}
Let $\Gamma_r$ be a given discrete subgroup of the additive group $(\R^{2g};+)$ of rank $r$; $0\leq r \leq 2g$ (i.e., $\Gamma_r$ is 
a $\Z$-module of rank $r$). Then $\Gamma_r$ is spanned by some $\R$-linearly independent vectors $\omega_1, \cdots ,\omega_r\in \V^g_\C=\V^{2g}_\R$,
$\Gamma_r = \Z\omega_1+ \cdots + \Z\omega_r.$
We denote by $W^r_\R$ the corresponding $\R$-linear subspace
\begin{align}
W^r_\R = Span_\R\{\omega_1, \cdots ,\omega_r\} = Span_\R(\Gamma_r).
\end{align}

\begin{definition}\label{DefIso}
The discrete subgroup $\Gamma_r$ is said to be isotropic with respect to $E$ if the symplectic form $E$ vanishes on $\Gamma_r\times \Gamma_r$ 
and therefore on $W^r_\R\times W^r_\R$ by linearity.
\end{definition}
Notice that the rank one discrete subgroups of $(\V^{2g}_\R;+)$ are all isotropic. Note also that the following assertions
   \begin{enumerate}
     \item[(i)] $\Gamma_r$ is isotropic with respect to $E$,
     \item[(ii)] The $\R$-linear subspace $W^r_\R$ is contained in its symplectic complement
            \begin{align}
            \WrRpe = \{v\in \R^{2g}; \, E(\omega_j,v)=0, \, j=1, \cdots ,r\} ,
            \end{align}
     \item[(iii)] For every $j,k=1,\cdots,r$, we have  $ H(\omega_j, \omega_k)= H(\omega_k, \omega_j),$
   \end{enumerate}
are clearly equivalents. Moreover, we have

\begin{lemma}\label{lem-Iso1} Let  $\Gamma_r$ be an isotropic discrete subgroup of $(\R^{2g};+)$ of rank $r$. Then, we have
\begin{enumerate}
  \item[(i)]    $ 0 \leq r \leq g$.
  \item[(ii)]   $H(u,v)=E(iu,v)$ for every $u, v \in \WrR$.
  \item[(iii)]  $\WrR \cap i \WrR =\{0\}$.
  \item[(iv)]   $\WrC :=  \WrR \oplus i \WrR$ is a $\C$-linear subspace of $\V^g_\C$ spanned by the $\C$-linearly independent vectors 
  $\{\omega_1, \omega_2, \cdots, \omega_r \}$,  $ \WrC  =Span_\C\{\omega_1, \cdots ,\omega_r\}$. In particular,  $\dim_\C\WrC =r$.
\end{enumerate}
\end{lemma}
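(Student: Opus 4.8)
The plan is to establish the four assertions in the order (ii), (iii), (iv), (i), since each feeds into the next and the dimensional bound (i) falls out for free at the very end.

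First I would dispatch (ii), which is essentially a restatement of isotropy. For $u,v\in\WrR$ the defining property of an isotropic subgroup (Definition~\ref{DefIso}) gives $E(u,v)=0$, so the decomposition \eqref{HE}, namely $H(u,v)=E(iu,v)+iE(u,v)$, collapses to $H(u,v)=E(iu,v)$. No positive-definiteness is needed here.

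The heart of the argument is (iii), and this is where I expect the only real obstacle: one must genuinely use that $H$ is positive definite, otherwise nothing prevents $\WrR$ from meeting $i\WrR$. I would take $w\in\WrR\cap i\WrR$ and write $w=iu$ with $u\in\WrR$. Both $w$ and $u$ lie in $\WrR$, so isotropy yields $E(w,u)=0$; but $E(w,u)=E(iu,u)$, and by part (ii) this equals $H(u,u)$. Hence $H(u,u)=0$, and positive-definiteness forces $u=0$, whence $w=iu=0$. This proves $\WrR\cap i\WrR=\{0\}$.

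With the trivial intersection in hand, (iv) is formal. The sum $\WrC=\WrR\oplus i\WrR$ is direct by (iii); it is stable under multiplication by $i$ because $i(\WrR\oplus i\WrR)=i\WrR\oplus(-\WrR)=\WrR\oplus i\WrR$, so it is a $\C$-subspace, and clearly $\WrC=Span_\C\{\omega_1,\dots,\omega_r\}$ since every $\C$-combination $\sum_j(a_j+ib_j)\omega_j$ splits as an element of $\WrR$ plus an element of $i\WrR$. For $\C$-linear independence I would take a relation $\sum_j(a_j+ib_j)\omega_j=0$; separating it as $\sum_j a_j\omega_j=-i\sum_j b_j\omega_j$ puts the left side in $\WrR$ and the right side in $i\WrR$, so by (iii) both vanish, and the $\R$-linear independence of the $\omega_j$ gives $a_j=b_j=0$. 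Thus $\dim_\C\WrC=r$. Finally (i) is immediate: $\WrC$ is a $\C$-subspace of $\V^g_\C$ of dimension $r$, so $0\le r\le\dim_\C\V^g_\C=g$.
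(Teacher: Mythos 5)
Your proof is correct, and parts (ii), (iii), (iv) follow essentially the same path as the paper (the paper merely says that (iii) ``follows easily from (ii)''; your computation $E(iu,u)=H(u,u)$ together with positive definiteness is exactly the natural way to fill that in). The genuine difference is in (i). The paper proves (i) first and independently, by the standard symplectic dimension count: isotropy gives $\WrR\subset\WrRpe$, and since $E$ is nondegenerate one has $\dim_\R\WrRpe+\dim_\R\WrR=2g$, whence $r\le 2g-r$. You instead reverse the logical order, proving (ii)$\Rightarrow$(iii)$\Rightarrow$(iv) first and then reading off $r\le g$ from the fact that $\WrC$ is an $r$-dimensional complex subspace of the $g$-dimensional space $\V^g_\C$. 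Both arguments are valid. Yours has the advantage of being entirely self-contained --- it avoids invoking the general formula for the dimension of the symplectic orthogonal complement, which the paper cites with an (empty) external reference --- at the cost of making (i) depend on the later items rather than standing on its own. The paper's route isolates (i) as a purely symplectic fact, while yours exhibits it as a consequence of the complexification $\WrC=\WrR\oplus i\WrR$ being honestly $r$-dimensional over $\C$; either is acceptable, and no step in your argument fails.
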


\begin{proof}
 Combination of the facts $\dim_\R\WrR=r$, $\WrR \subset \WrRpe$ and  $$\dim_\R\WrRpe + \dim_\R\WrR = \dim_\R\V^{2g}_\R=2g$$
(see \cite{.}) infer $(i)$.
$(ii)$ is an immediate consequence of \eqref{HE} combined with the fact that $E(u,v)=0$ for every $u,v\in \WrR$.
$(iii)$ follows easily form $(ii)$.
For $(iv)$, $ \WrC  =Span_\C\{\omega_1, \cdots ,\omega_r\}$ is clear and the $\C$-linear independence of $\{\omega_1, \omega_2, \cdots, \omega_r \}$ 
can be obtained using $(iii)$.
\end{proof}

Thus, under the assumption that $\Gamma_r$ is isotropic, the vector space $\V^g_\C$ can then be seen as direct sum, $\V^g_\C = \WrC \oplus \WrCp$,
 where $\WrCp$ denotes the orthogonal complement of $\WrC$ with respect to $H$. The subspace $\WrCp$  is then generated by some $\C$-linearly 
 independent vectors, $\omega_{r+1}, \cdots ,\omega_g \in \V^g_\C$. 
 Without lost of generality, we can assume that the $\omega_{r+1}, \cdots ,\omega_g$ are orthogonal with respect to the hermitian form $H$. That is
  $$
  H(\omega_j, \omega_k)= \delta_{jk}; \quad j,k=r+1,\cdots,g.
  $$
Accordingly, we can decompose $H$ on $\V^g_\C\times \V^g_\C$ as
\begin{align} \label{Hdecomp}
H(u,v)= \sum_{j,k=1}^r z_j\overline{w_k} H(\omega_j,\omega_k)+\sum_{j=r+1}^g z_j\overline{w_j},
\end{align}
for every $u=\sum_{j=1}^gz_j \omega_j, v=\sum_{k=1}^g w_k \omega_k\in \V^g_\C$.
Furthermore, since $H(\omega_j,\omega_k)= H(\omega_k,\omega_j)$ for every $1\leq j,k \leq r$, the form $(u,v)\longmapsto H(u,\overline{v})$, 
where $\overline{v}=\sum_{k=1}^g \overline{w_k} \omega_k$, induces a non-degenerate symmetric bilinear form on $\WrC\times \WrC$ by considering
 \begin{align} \label{Bform}
B(u,v)=H(u,\overline{v})=\sum_{j,k=1}^r z_j w_k H(\omega_j,\omega_k); \qquad u,v \in \WrC.
\end{align}
Whence the matrix $B=(H(\omega_j,\omega_k))_{1\leq j,k \leq r}$ is non-degenerate, real and symmetric.
Whenever $B$ denotes the form \eqref{Bform} on $\WrC\times \WrC$, $\widetilde{B}$ denotes its natural extension to $\V^g_\C\times \V^g_\C$, 
$\widetilde{B}(u,v)=H(u,\overline{v})$; $u,v\in \V^g_\C$.

We conclude this section by claiming that

\begin{lemma}\label{lem-Iso2}
 Let  $\Gamma_r$ be an isotropic discrete subgroup of $(\R^{2g};+)$ of rank $r$. Then, we have
\begin{enumerate}
  \item[(i)]   $H(u,v)=H(\overline{u},\overline{v})$ for every $u, v \in \WrC$.
  \item[(ii)]  $H(u,\gamma)=\widetilde{B}(u,\gamma)$ for every $u\in \V^g_\C$ and $\gamma\in \Gamma_r$.
  \item[(iii)] $\widetilde{B}(u+\gamma,u+\gamma)= \widetilde{B}(u,u) + 2 H\left(u+\frac{\gamma}{2},\gamma\right)$
               for every $u\in \V^g_\C$ and $\gamma\in \Gamma_r$.
\end{enumerate}
\end{lemma}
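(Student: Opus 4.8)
The plan is to prove the three assertions directly from the decomposition \eqref{Hdecomp} of $H$ together with the reality and symmetry of the matrix $B=(H(\omega_j,\omega_k))_{1\leq j,k\leq r}$ established just above. The guiding principle is that on $\WrC$ the hermitian form $H$ degenerates into the symmetric bilinear form $B$ (this is exactly the content of the isotropy hypothesis), and all three statements are formal consequences of this.

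For $(i)$, I would expand $H(u,v)$ for $u=\sum_{j=1}^r z_j\omega_j$ and $v=\sum_{k=1}^r w_k\omega_k$ in $\WrC$ using \eqref{Hdecomp}, obtaining $H(u,v)=\sum_{j,k=1}^r z_j\overline{w_k}\,H(\omega_j,\omega_k)$. Then $\overline{u}=\sum_j \overline{z_j}\omega_j$ and $\overline{v}=\sum_k \overline{w_k}\omega_k$, so by the same formula $H(\overline{u},\overline{v})=\sum_{j,k=1}^r \overline{z_j}\,w_k\,H(\omega_j,\omega_k)$. Since each $H(\omega_j,\omega_k)$ is real (this is assertion (iii) in the list of equivalences preceding Lemma \ref{lem-Iso1}, restated after \eqref{Bform} as the reality of $B$), the two expressions are complex conjugates of scalars built from real coefficients, and swapping $(j,k)$ together with the symmetry $H(\omega_j,\omega_k)=H(\omega_k,\omega_j)$ shows they coincide. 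So $(i)$ reduces to the reality and symmetry of $B$.

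For $(ii)$, I would write $u=\sum_{j=1}^g z_j\omega_j\in\V^g_\C$ and $\gamma=\sum_{k=1}^r m_k\omega_k\in\Gamma_r$ with $m_k\in\Z$; the key point is that $\gamma$ has no components along $\omega_{r+1},\dots,\omega_g$. Feeding this into \eqref{Hdecomp}, the second sum $\sum_{j=r+1}^g z_j\overline{w_j}$ contributes nothing because the coordinates $w_{r+1},\dots,w_g$ of $\gamma$ vanish, leaving $H(u,\gamma)=\sum_{j=1}^g\sum_{k=1}^r z_j\,\overline{m_k}\,H(\omega_j,\omega_k)=\sum_{j,k}z_j m_k H(\omega_j,\omega_k)$ since $m_k$ is real. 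Comparing with the defining formula $\widetilde{B}(u,\gamma)=H(u,\overline{\gamma})=\sum_{j,k} z_j m_k H(\omega_j,\omega_k)$ (note $\overline{\gamma}=\gamma$ in its coordinate form up to the reality of $m_k$) gives the claim. The only subtlety is bookkeeping the ranges of summation and using $\overline{m_k}=m_k$.

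For $(iii)$, I would simply expand the left-hand side by bilinearity of $\widetilde{B}$, writing $\widetilde{B}(u+\gamma,u+\gamma)=\widetilde{B}(u,u)+\widetilde{B}(u,\gamma)+\widetilde{B}(\gamma,u)+\widetilde{B}(\gamma,\gamma)$, and then identify each cross term using $(ii)$ and the symmetry of $\widetilde{B}$ restricted to the relevant arguments. Using $(ii)$, $\widetilde{B}(u,\gamma)=H(u,\gamma)$; the term $\widetilde{B}(\gamma,u)$ equals $\widetilde{B}(u,\gamma)$ by symmetry of $B$ on $\WrC$ (applicable since $\gamma\in\Gamma_r\subset\WrC$), and $\widetilde{B}(\gamma,\gamma)=H(\gamma,\gamma)$. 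Collecting, $\widetilde{B}(u+\gamma,u+\gamma)=\widetilde{B}(u,u)+2H(u,\gamma)+H(\gamma,\gamma)=\widetilde{B}(u,u)+2H\!\left(u+\tfrac{\gamma}{2},\gamma\right)$, where the last equality uses linearity of $H$ in its first argument. The main obstacle, such as it is, lies in $(iii)$: one must be careful that the cross terms combine symmetrically, which is legitimate only because both arguments of the off-diagonal pieces land in $\WrC$ where $\widetilde{B}$ is genuinely symmetric; away from $\WrC$ the extension $\widetilde{B}$ need not be symmetric, so the restriction $\gamma\in\Gamma_r$ is doing essential work.
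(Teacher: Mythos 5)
Your treatments of (ii) and (iii) are correct and essentially coincide with the paper's: (ii) is just the observation that $\overline{\gamma}=\gamma$ in the coordinates attached to $(\omega_1,\dots,\omega_g)$, and (iii) is bilinearity plus symmetry of $\widetilde{B}$. One remark on (iii): your caveat that $\widetilde{B}$ "need not be symmetric away from $\WrC$" is unfounded — by \eqref{Hdecomp} one has $\widetilde{B}(u,v)=H(u,\overline{v})=\sum_{j,k=1}^r z_jw_k\,H(\omega_j,\omega_k)+\sum_{j=r+1}^g z_jw_j$, a symmetric bilinear form on all of $\V^g_\C\times\V^g_\C$; this global symmetry is exactly what the paper invokes when it collapses the cross terms directly into $2\widetilde{B}\left(u+\frac{\gamma}{2},\gamma\right)$. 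Your workaround, justifying $\widetilde{B}(\gamma,u)=\widetilde{B}(u,\gamma)$ by "symmetry of $B$ on $\WrC$", does not quite parse as stated, since only one of the two arguments lies in $\WrC$ — though the conclusion is right.

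The genuine gap is in (i). Your computation gives $H(u,v)=\sum_{j,k=1}^r z_j\overline{w_k}\,H(\omega_j,\omega_k)$ and $H(\overline{u},\overline{v})=\sum_{j,k=1}^r \overline{z_j}w_k\,H(\omega_j,\omega_k)$; since the coefficients are real and symmetric, swapping $(j,k)$ in the second sum yields $\sum_{j,k}w_j\overline{z_k}\,H(\omega_j,\omega_k)=H(v,u)=\overline{H(u,v)}$ — not $H(u,v)$. The final step "shows they coincide" is a non sequitur: it would require $H(u,v)$ to be real-valued on $\WrC\times\WrC$, which is false. In fact the identity of (i) as printed already fails for $g=r=1$ with $H(\omega_1,\omega_1)=1$, $u=i\omega_1$, $v=\omega_1$: then $H(u,v)=i$ while $H(\overline{u},\overline{v})=-i$. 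So no argument can close this gap as stated; what your computation actually establishes, and what is presumably the intended assertion (it is the version used downstream, e.g. $\overline{B(w,w)}=B(\overline{w},\overline{w})$ in the reproducing kernel formula), is the conjugate identity $H(\overline{u},\overline{v})=\overline{H(u,v)}$. You should either prove that corrected statement or flag the misprint; as written, your proof of (i) claims an equality that does not hold.
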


\begin{proof}
(i) follows by symmetry of $B$ on $\WrC\times \WrC$, while (ii) is immediate since $\overline{\gamma} =\gamma$.
 Finally, the bilinearity and the symmetry of $\widetilde{B}$ with  $(ii)$ yield $(iii)$;
$$\widetilde{B}(u+\gamma,u+\gamma)= \widetilde{B}(u,u) + 2\widetilde{B}\left(u+\frac{\gamma}2,\gamma\right)= \widetilde{B}(u,u) + 2H\left(u+\frac{\gamma}2,\gamma\right).$$
\end{proof}


\section{Statement of main results}
  For simplicity of exposition, we set $\V^g_\C=\VgC$ and $\V^{2g}_\R=\R^{2g}$.
For given fixed real number $\nu>0$, given discrete subgroup $\Gamma_r = \Z\omega_1+ \cdots + \Z\omega_r$; $0\leq r\leq 2g$, of $(\C^g,+)$
and given mapping $\chi$ on $\Gamma_r$ such that $|\chi(\gamma)|=1$, we consider the space $\Onugr(\C^g)$ of complex valued holomorphic 
functions on $\C^g$ displaying the functional equation
\begin{align}\label{FctEq1}
f(u+\gamma) = \chi(\gamma)  e^{\nu H\left(u+\frac{\gamma}2, \gamma\right) } f(u);\quad u\in \C^g,  \gamma\in\Gamma_r.
\end{align}
Notice that the function $|f(u)|^2 e^{-\nu H(u, u)}$ is $\Gamma_r$-periodic for every $f$ satisfying \eqref{FctEq1}. Therefore the quantity
 \begin{align}\label{IntrNorm1}
\normnur{f}^2 := \int_{\Lambda(\Gamma_r)} |f(u)|^2 e^{-\nu H(u, u)} \mes(u)
\end{align}
makes sense and is independent of the choice of $\Lambda(\Gamma_r)$, where $\mes$ denotes the Lebesgue measure on $\C^g$. 
Here $\Lambda(\Gamma_r)$ is a fundamental domain of $\Gamma_r$ representing in $\R^{2g}=\C^g$ the orbital group $\R^{2g}/\Gamma_r$ 
with respect to its quotient topology. It can be identified to $\Lambda(\Gamma_r)\equiv  ([0,1])^r\times \R^{2g-r}$ and therefore to
$$\Lambda(\Gamma_r) \equiv ([0,1]\times \R)^r \times \C^{g-r}$$
when $0\leq r \leq g$.
Thence, we perform the functional space  $\Fnugr(\C^g)$  of all holomorphic functions on $\C^g$ satisfying
\eqref{FctEq1} and such that $\normnur{f}^2$ is finite. That is
$$
\Fnugr (\C^g)= \Onugr(\C^g) \cap L^2(\Lambda(\Gamma_r); e^{-\nu H(u, u)} \mes) .
$$ 
The following result can be obtained in a similar way as in \cite{GhIn2008} (see also \cite{GhIn2012,Souid2015}).

\begin{proposition}\label{prop-trivialr}
Let $(\C^g,H,E)$ and $(\nu,\Gamma_r,\chi)$ be fixed as above. If the space $\Onugr(\C^g)$ (resp. $\Fnugr(\C^g)$) is non-trivial, 
then the triplet $(\nu,\Gamma_r,\chi)$ verify the cocycle condition
$$\chi(\gamma+\gamma')=\chi(\gamma)\chi(\gamma')e^{i\nu E(\gamma,\gamma')}; \quad \forall \gamma,\gamma'\in \Gamma_r .\eqno{(RDQ)} $$
\end{proposition}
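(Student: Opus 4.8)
The plan is to exploit the fact that a nonzero element $f$ of $\Onugr(\C^g)$ must satisfy the functional equation \eqref{FctEq1} simultaneously for \emph{every} $\gamma\in\Gamma_r$, so that the attached automorphy factor $j(\gamma,u)=\chi(\gamma)e^{\nu H(u+\frac{\gamma}{2},\gamma)}$ has to be compatible with the group law on $\Gamma_r$. Since $\Fnugr(\C^g)\subset\Onugr(\C^g)$, it suffices to argue for $\Onugr(\C^g)$. First I would fix a nonzero holomorphic $f$ in the space; because $f$ is holomorphic and not identically zero, its zero set is a proper analytic subset of $\C^g$, so there is a point $u_0$ with $f(u_0)\neq 0$ (indeed $\{f\neq 0\}$ is open and dense). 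This nonvanishing point is the only genuine analytic input, and it is precisely where the non-triviality hypothesis enters.

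Next, for arbitrary $\gamma,\gamma'\in\Gamma_r$ I would compute $f(u_0+\gamma+\gamma')$ in two ways. Applying \eqref{FctEq1} directly with the element $\gamma+\gamma'\in\Gamma_r$ gives
\[
f(u_0+\gamma+\gamma') = \chi(\gamma+\gamma')\, e^{\nu H\left(u_0+\frac{\gamma+\gamma'}{2},\,\gamma+\gamma'\right)} f(u_0),
\]
while translating first by $\gamma$ and then by $\gamma'$ (passing through the intermediate point $u_0+\gamma$) and invoking \eqref{FctEq1} twice gives
\[
f(u_0+\gamma+\gamma') = \chi(\gamma)\chi(\gamma')\, e^{\nu\left[H\left(u_0+\gamma+\frac{\gamma'}{2},\,\gamma'\right)+H\left(u_0+\frac{\gamma}{2},\,\gamma\right)\right]} f(u_0).
\]

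Then I would divide by $f(u_0)\neq 0$ and compare the two exponents, expanding each by linearity of $H$ in its first argument. The terms linear in $u_0$ as well as the diagonal contributions $\tfrac12 H(\gamma,\gamma)$ and $\tfrac12 H(\gamma',\gamma')$ cancel between the two sides, leaving only the mixed contribution $\tfrac{\nu}{2}\bigl(H(\gamma,\gamma')-H(\gamma',\gamma)\bigr)$. Using that $H$ is hermitian, so $H(\gamma',\gamma)=\overline{H(\gamma,\gamma')}$, one has $H(\gamma,\gamma')-H(\gamma',\gamma)=2i\,\Im H(\gamma,\gamma')=2iE(\gamma,\gamma')$, and this mixed term collapses to the factor $e^{i\nu E(\gamma,\gamma')}$. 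This yields exactly the relation (RDQ).

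The computation is routine once it is set up; the single point that requires care is the bookkeeping of the exponent, and in particular the ordering of the two translations. The antisymmetry $E(\gamma,\gamma')=-E(\gamma',\gamma)$ means that the reverse order would produce $e^{-i\nu E(\gamma,\gamma')}$ instead, so one must translate first by $\gamma$ and then by $\gamma'$ to obtain the sign stated in (RDQ). (Demanding that both orderings agree, as they must for a well-defined $f$, even forces $e^{2i\nu E(\gamma,\gamma')}=1$; but this further constraint is not needed here, the Proposition asserting only the necessity of (RDQ).)
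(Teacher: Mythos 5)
Your argument is correct and is exactly the standard cocycle computation that the paper has in mind: the paper itself omits the proof, deferring to \cite{GhIn2008}, and the intended argument there is precisely your evaluation of $f(u_0+\gamma+\gamma')$ in two ways at a nonvanishing point $u_0$, with the mixed term $\tfrac{\nu}{2}\bigl(H(\gamma,\gamma')-H(\gamma',\gamma)\bigr)=i\nu E(\gamma,\gamma')$ producing the (RDQ) factor. Your closing parenthetical is also sound (and in fact recovers the paper's later remark that $\tfrac{\nu}{\pi}E(\gamma,\gamma')\in\Z$); the only slight overstatement is the claim that the reverse ordering would miss (RDQ) --- it simply yields (RDQ) with the roles of $\gamma$ and $\gamma'$ interchanged, which is the same universally quantified condition.
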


\begin{remark}[Geometric interpretation]
The Riemann-Dirac quantization condition (RDQ) is equivalent to say that the family of multipliers
$  J^{\nu,\chi}_\gamma(u):=\chi(\gamma )e^{\nu H\left(u+\frac{\gamma}2,\gamma\right)}$, $\gamma\in \Gamma_r$, defines a line bundle over 
the quasi-torus $\VgC/\Gamma_r$ as the quotient of $\VgC\times\C$ with respect to the action of $\Gamma_r$ given by the mapping
   $\phi_\gamma(u;v) := (u+\gamma; J^{\nu,\chi}_\gamma(u)    v)$.
 Thus, $\Fnugr(\VgC)$ can be seen as the space of holomorphic sections of the above line bundle.
\end{remark}

\begin{remark}[Group theoretic viewpoint]
 Under the $(RDQ)$ condition, the map $\chi$ induces a group homomorphism from $(\Gamma_r,+)$ into the Heisenberg group 
 $N_E:=(\VgC\times U(1); \cdot_E)$ endowed with the law group $\cdot_E$ defined by
  $
  (u;\lambda) \cdot_E (v;\mu):= (u+v; \lambda\mu e^{i\nu E(u,v)}),
  $
  by considering the injection map $ i_E(\gamma):= (\gamma,\chi(\gamma))$.
\end{remark}

\begin{remark} If (RDQ) holds then necessary $\frac{\nu}{\pi}E(\gamma,\gamma')$ takes integers values on $\Gamma_r \times \Gamma_r$. Moreover,
 $\chi$ is a character if and only if $\nu E(\gamma,\gamma')$ belongs to $2\pi\Z$ for every $\gamma,\gamma'\in \Gamma_r$.
\end{remark}

In the sequel, we have to study the functional spaces  $\Onugr(\VgC)$ and $\Fnugr(\VgC)$ for the special class of 
isotropic discrete subgroups $\Gamma_r$ of $(\C^g,+)$, with $2 \leq r \leq g$. To this end, we need to fix additional notations.
We identify the elements of $\VgC$, $\WrC$ and $\WrCp$  with their complex coordinates with respect to the basis  $(\omega_1,  \cdots ,\omega_{g})$. 
Thus, we write $u= z_1\omega_1, \cdots,z_g\omega_g = (z_1, \cdots,z_g) \in \VgC$ as $u=(z,z_\perp)$ with $z=(z_1, \cdots,z_r)\in \C^r $ with respect 
to $\{\omega_1, \cdots,\omega_r\}$ and $z_\perp=(z_{r+1}, \cdots,z_g)\in \C^{g-r}$ with respect to $\{\omega_{r+1}, \cdots,\omega_g\}$.
Accordingly, we make use of the restriction $\widetilde{H}$ of $H$ to $\WrC$ and the usual scalar product $\scal{,}_{\C^{g-r}}$ on $(\WrC)^\perp$,
\begin{align} \label{HH}
\widetilde{H}(z,w)=  \sum_{j,k=1}^r z_j\overline{w_k}     H(\omega_j,\omega_k)  \quad \mbox{and} \quad  
\scal{z_\perp,w_\perp}_{\C^{g-r}}= \sum_{j=r+1}^g z_j\overline{w_j} , 
\end{align}
to rewrite $H(u,v)$ in \eqref{Hdecomp} for $u= (z,z_\perp)$ and $(w,w_\perp)$ as
\begin{align} \label{HHdecomp}
H(u,v)=  \widetilde{H}(z,w) +  \scal{z_\perp,w_\perp}_{\C^{g-r}} = B(z,\overline{w}) +  \scal{z_\perp,w_\perp}_{\C^{g-r}}.
\end{align} 
We also make use of the usual multi-index notations:
 $|n|=n_1+ \cdots + n_s$ and $n!=n_1! \cdots  n_s!$ for given multi-index $n=(n_1, \cdots , n_s) \in (\Z^+)^s$,  $z^n =z_1^{n_1}  \cdots  z_s^{n_s}$ and
 $z w =z_1w_1  \cdots  z_sw_s$ for given $z = (z_1, \cdots , z_s), w = (w_1, \cdots , w_s) \in \C^s$. 

Under the assumption that $\Gamma_r=\Z \omega_1+\Z \omega_2+\cdots+\Z \omega_r$,
  is isotropic, the $(RDQ)$ condition becomes equivalent to say that $\chi$ is a character. Then, it is 
  completely determined
\begin{align}\label{charact}
 \chi(\gamma)=\chi_\alpha(\gamma) 
 = e^{2\pi i \alpha m}
\end{align} 
for certain $\alpha= (\alpha_1, \cdots ,\alpha_r)\in \R^r$, with $\gamma =(m_1,m_2,\cdots,m_r)=m \in\Z^r$.
 Therefore, the functional equation  \eqref{FctEq1} reads
\begin{align}\label{EqFct3m}
f(z + m, z_\perp) = e^{\nu B\left(z + \frac{m}{2},m\right) + 2 \pi i \alpha m }f(z, z_\perp).
\end{align}
Notice that in \eqref{EqFct3m},  the $z_\perp$ can be seen as parameter. Thus we are dealing only with the $r$-complex variable $z=(z_1,\cdots, z_r)$. 
Nevertheless,  the condition of $L^2$-integrable will involves such parameter as we will see below.

The first main result of this paper is the following

\begin{theorem}\label{Thmbasis}
For given isotropic discrete subgroup $\Gamma_r$ and the character $\chi_\alpha$ given by \eqref{charact}, we have
\begin{enumerate}
  \item The functions $\enk(z,z_\perp)$ given by
 \begin{align}\label{fctbasis}
\enk(z,z_\perp):= e^{\frac{\nu}{2} B(z,z) + 2i\pi (\alpha + n) z}  z_{\perp}^{k},
 \end{align}
  for varying $n\in\Z^r$ and $ k\in(\Z^+)^{g-r}$, constitute an orthogonal basis in $\Fnugr(\VgC)$ with
 \begin{align}\label{normenk}
 \normnur{ e^{\alpha,{\nu} }_{n,k}}^2
   = \left(\frac{1}{\sqrt{\det B}}\right)  \left(\dfrac{\pi }{2\nu}\right)^{r/2} \left(\dfrac{\pi}{{{\nu}} }\right)^{g-r}
    \left(\frac{k!}{\nu^{|k|}}\right) e^{\frac{2\pi^2}{{{\nuw}} }(n+\alpha)B^{-1}(n+\alpha)}   .
    \end{align}
\item A function $f$  belongs to $\Fnugr(\VgC)$ if and only if it can be expanded in series as
 \begin{align}\label{expansion-r}
     f(z,z_\perp):= \sum\limits_{(n,k)\in\Z^r\times (\Z^+)^{g-r}}
     a_{n, k} \,  e^{\frac{\nu}{2} B(z,z) + 2i\pi (\alpha + n) z}  z_{\perp}^{k},
     \end{align}
   for some sequence of complex numbers $(a_{n,k})_{n;k}$ satisfying the growth condition:
   $$\left(\frac{1}{\sqrt{\det B}}\right) \left(\dfrac{\pi }{2\nu}\right)^{r/2} \left(\dfrac{\pi}{{{\nu}} }\right)^{g-r} 
   \sum\limits_{(n;k)\in\Z^r\times (\Z^+)^{g-r}} \left(\frac{k!}{\nu^{|k|}}\right) e^{\frac{2\pi^2}{{{\nuw}} }(n+\alpha)B^{-1}(n+\alpha)}
    |a_{n,k}|^2 <+\infty .$$
\end{enumerate}
\end{theorem}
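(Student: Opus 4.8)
The plan is to establish the two assertions in the order \emph{(orthogonality and norms)} then \emph{(totality and the growth characterization)}, exploiting throughout the orthogonal splitting $\VgC = \WrC \oplus \WrCp$ recorded in \eqref{HHdecomp}, along which both the Gaussian weight $e^{-\nu H(u,u)} = e^{-\nu B(z,\overline{z})}\,e^{-\nu\scal{z_\perp,z_\perp}_{\C^{g-r}}}$ and the Lebesgue measure factor. Writing $u=(z,z_\perp)$ with $z\in\C^r$, $z_\perp\in\C^{g-r}$, the fundamental domain is $([0,1]\times\R)^r\times\C^{g-r}$. First I would check that every $\enk$ genuinely belongs to $\Fnugr(\VgC)$: holomorphy is clear, and the functional equation \eqref{EqFct3m} follows from the identity $B(z+m,z+m)=B(z,z)+2H(z+\tfrac{m}{2},m)$ of Lemma~\ref{lem-Iso2}(iii) (together with Lemma~\ref{lem-Iso2}(ii)) and the fact that $e^{2\pi i nm}=1$ for $n,m\in\Z^r$; finiteness of the norm will drop out of the next step.

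Second, I would compute $\scalnuer{\enk,\enkp}$. Using the factorization above, the integral splits into a $z$-integral times a $z_\perp$-integral. For the $z_\perp$-factor the classical Fock--Bargmann computation $\int_{\C^{g-r}} z_\perp^{k}\overline{z_\perp^{k'}}\,e^{-\nu\scal{z_\perp,z_\perp}_{\C^{g-r}}}\mes = \delta_{k,k'}\,(\pi/\nu)^{g-r}\,k!/\nu^{|k|}$ gives orthogonality in $k$ and the required normalization. For the $z$-factor, write $z=x+iy$ with $x,y\in\R^r$; because $B$ is real and symmetric, after combining the holomorphic exponentials of $\psiNuU e^{2\pi i(\alpha+n)z}$ with the weight $e^{-\nu B(z,\overline{z})}$ the integrand reduces to a factor $e^{2\pi i x(n-n')}$ times an $x$-independent Gaussian in $y$. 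Integrating $x$ over $[0,1]^r$ then yields $\delta_{n,n'}$, establishing orthogonality in $n$, and the remaining $y$-integral is a standard Gaussian evaluating to $(\pi/2\nu)^{r/2}(\det B)^{-1/2}e^{\frac{2\pi^2}{\nuw}(n+\alpha)B^{-1}(n+\alpha)}$. Multiplying the two factors gives \eqref{normenk} and shows that $\{\enk\}$ is an orthogonal family with finite norms.

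Third, and this is the crux, I would prove totality and the characterization simultaneously. Given $f\in\Fnugr(\VgC)$, set $h(z,z_\perp):=f(z,z_\perp)\,e^{-\frac{\nu}{2}B(z,z)-2\pi i\alpha z}$. A direct computation using \eqref{EqFct3m} and Lemma~\ref{lem-Iso2}(iii) shows that $h$ is holomorphic and $\Z^r$-periodic in $z$; hence it admits a Fourier expansion $h(z,z_\perp)=\sum_{n\in\Z^r}c_n(z_\perp)\,e^{2\pi i nz}$, whose coefficients $c_n$, obtained by integrating over $[0,1]^r$, are entire in $z_\perp\in\C^{g-r}$. Expanding each $c_n$ in its Taylor series $c_n(z_\perp)=\sum_k a_{n,k}z_\perp^{k}$ and multiplying back by $\psiNuAlphaU$ produces exactly the expansion \eqref{expansion-r}. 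The growth condition is then the Parseval identity $\normnur{f}^2=\sum_{n,k}|a_{n,k}|^2\normnur{\enk}^2$, obtained by inserting the expansion into the norm integral and invoking the orthogonality of the second step to annihilate the cross terms. Conversely, a coefficient sequence satisfying the growth condition defines, by the same orthogonality, an $L^2$-convergent series whose sum is holomorphic (the growth bound forces local uniform convergence) and satisfies \eqref{FctEq1}, hence lies in $\Fnugr(\VgC)$.

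The main obstacle is precisely this last step: justifying that the formal Fourier--Taylor expansion of an arbitrary $f$ converges to $f$ in the $L^2$-norm and legitimizing the term-by-term integration leading to Parseval. The delicate points are the interchange of the infinite summation with the defining integral for $\normnur{f}$, which I would handle by a Tonelli argument on the non-negative integrand combined with the orthogonality relations, and the verification that the $L^2$-limit of the partial sums coincides with $f$, using that $L^2$-convergence forces almost-everywhere convergence along a subsequence while the Fourier--Taylor series already converges to $f$ locally uniformly.
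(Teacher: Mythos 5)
Your proposal is correct and follows essentially the same route as the paper: the reduction of the functional equation to a $\Z^r$-periodic holomorphic factor (your $h$ is exactly the paper's $f^{*}=f/\psi_{\nu,\alpha}$), the Fourier--Taylor expansion in $(z,z_\perp)$, the Fubini splitting of the inner product into a standard Fock--Bargmann integral in $z_\perp$ and a Gaussian integral in $y=\Im z$ after the $x$-integration produces $\delta_{n,n'}$, and the Parseval identity justified by local uniform convergence of the expansion. The only cosmetic difference is that the paper phrases completeness as ``$\langle f, e^{\alpha,\nu}_{n,k}\rangle=0$ for all $(n,k)$ forces $a_{n,k}=0$'' via integration over a compact exhaustion, whereas you invoke Parseval directly; these are equivalent.
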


\begin{remark}
The spaces $\Onugr(\VgC)$ and $\Fnugr(\VgC)$, associated to given isotropic discrete subgroup $\Gamma_r$ and character $\chi_\alpha$, are nonzero spaces.
\end{remark}

\begin{definition}
 $\Onugr(\VgC)$ (resp. $\Fnugr(\VgC)$) is called the space of holomorphic $(\Gamma_r,\chi_\alpha)$-theta (resp. $(L^2,\Gamma_r,\chi_\alpha)$-theta) 
 functions associated to the isotropic discrete subgroup $\Gamma_r$. $\Fnugr(\VgC)$ is also called $(\Gamma_r,\chi_\alpha)$-theta Fock-Bargmann space.
\end{definition}

Now, for every $z\in \C^r$ and $F \in \C^{r\times r}$ such that $F$ is symmetric and its imaginary part $Im(F)$ is strictly positive definite, we define
 the Riemann theta function  
 with characteristics $\alpha,\beta\in \R^r$ \cite{Riemann1857,Prym1882,Wirtinge1895,Mumford83} by
\begin{align}\label{defTheta}
 \Theta_{\alpha,\beta} (z \big |  F)
 = \sum\limits_{n\in\Z^r}
   e^{2i\pi \left\{ \frac 12 (\alpha+n)F(\alpha+n) + (\alpha+n)(z+\beta)\right\} }.
\end{align}
 The positive definiteness of $Im(F)$ guarantees the convergence of \eqref{defTheta}, for all values of $z\in \C^r$.
Then, we assert

\begin{theorem}\label{ThmRKHS}
$\Fnugr(\VgC)$ is a reproducing kernel Hilbert space. Its reproducing kernel $K(u,v)$ is given in terms of the theta function $\Theta_{\alpha,0}$. 
More explicitly, for every $u=(z,z_\perp)$ and $v=(w,w_\perp)$ in $\VgC$, we have
   \begin{align*}
 K(u,v) =
 \sqrt{\det B} \left(\dfrac{2\nu}{\pi}\right)^{r/2} \left(\dfrac{\nu}{\pi}\right)^{g-r} e^{\frac{\nu}{2}\left( B(z,z) + \overline{B(w,w)} \right) }
\Theta_{\alpha,0}\left( z-\overline{w} \bigg | \frac{2\pi i}{\nu} B^{-1} \right)   e^{\nu\scal{z_\perp,w_\perp}_{\C^{g-r}} }.
\end{align*}
\end{theorem}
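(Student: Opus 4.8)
The plan is to build the reproducing kernel directly from the orthogonal basis $\{\enk\}$ supplied by Theorem~\ref{Thmbasis}. For a separable Hilbert space of holomorphic functions admitting an orthogonal basis, the reproducing kernel (once its existence is known) is given by the Parseval-type series
\begin{align*}
K(u,v) = \sum_{(n,k)\in\Z^r\times(\Z^+)^{g-r}} \frac{\enk(u)\,\overline{\enk(v)}}{\normnur{\enk}^2}.
\end{align*}
First I would confirm that $\Fnugr(\VgC)$ is a reproducing kernel Hilbert space by checking that the evaluation functionals are bounded, equivalently that the diagonal series $\sum_{n,k}|\enk(u)|^2/\normnur{\enk}^2$ converges locally uniformly. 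Writing $u=(z,z_\perp)$ and using \eqref{fctbasis} and \eqref{normenk}, this diagonal series factors into a $k$-sum equal to the convergent exponential $\sum_k \nu^{|k|}|z_\perp^k|^2/k! = e^{\nu\scal{z_\perp,z_\perp}_{\C^{g-r}}}$, and an $n$-sum in which the Gaussian weight coming from $e^{-\frac{2\pi^2}{\nu}(n+\alpha)B^{-1}(n+\alpha)}$ dominates the at most exponential growth of $|e^{2i\pi(\alpha+n)z}|^2$. The decay is genuine because $B=(H(\omega_j,\omega_k))_{1\le j,k\le r}$ is not merely non-degenerate but positive definite: for real $x$ one has $x^{T}Bx = H\!\left(\sum_j x_j\omega_j,\sum_j x_j\omega_j\right)>0$, so $B^{-1}$ is positive definite as well.

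With the reproducing property in hand, the computation of $K$ reduces to evaluating the series above, and the key structural point is that it factors. Using \eqref{fctbasis}, the reality of $B$ (whence $\overline{B(w,w)}=B(\overline{w},\overline{w})$) and the reality of $\alpha+n$, the numerator splits as
\begin{align*}
\enk(u)\,\overline{\enk(v)} = e^{\frac{\nu}{2}\left(B(z,z)+\overline{B(w,w)}\right)}\, e^{2i\pi(\alpha+n)(z-\overline{w})}\, z_\perp^{k}\,\overline{w_\perp}^{k},
\end{align*}
and, since $\normnur{\enk}^2$ is likewise a product of an $n$-part and a $k$-part, the whole sum becomes the constant $\sqrt{\det B}\,(2\nu/\pi)^{r/2}(\nu/\pi)^{g-r}$ times $e^{\frac{\nu}{2}(B(z,z)+\overline{B(w,w)})}$ times the product of an $n$-series and a $k$-series.

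It then remains to identify the two series. Dividing the $k$-part by $k!/\nu^{|k|}$ and summing gives $\sum_k \nu^{|k|}(z_\perp\overline{w_\perp})^{k}/k! = e^{\nu\scal{z_\perp,w_\perp}_{\C^{g-r}}}$, the classical Fock--Bargmann kernel on $\C^{g-r}$. For the $n$-part, setting $F=\frac{2\pi i}{\nu}B^{-1}$ one checks the identity $2i\pi\cdot\frac12(\alpha+n)F(\alpha+n) = -\frac{2\pi^2}{\nu}(\alpha+n)B^{-1}(\alpha+n)$, so that the $n$-series is exactly
\begin{align*}
\sum_{n\in\Z^r} e^{2i\pi\left\{\frac12(\alpha+n)F(\alpha+n)+(\alpha+n)(z-\overline{w})\right\}} = \Theta_{\alpha,0}\!\left(z-\overline{w}\,\big|\,\tfrac{2\pi i}{\nu}B^{-1}\right)
\end{align*}
in the notation \eqref{defTheta}, the positive definiteness of $\Im(F)=\frac{2\pi}{\nu}B^{-1}$ ensuring convergence. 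Assembling the constant, the exponential prefactor, the theta factor and the Fock factor yields precisely the asserted formula for $K(u,v)$.

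I expect the main obstacle to be the rigorous justification of the reproducing kernel Hilbert space structure, that is the boundedness of point evaluations and the locally uniform convergence that legitimizes writing $K$ as the basis series and interchanging summation with evaluation; once this is settled, the two explicit identifications are essentially bookkeeping, the only delicate point being to match characteristics and period matrix against the convention \eqref{defTheta} and to record that $\Im(F)$ is positive definite.
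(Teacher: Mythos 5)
Your proposal follows essentially the same route as the paper: bound the point evaluations via Cauchy--Schwarz against the orthogonal basis (the diagonal series is exactly the function $\widetilde{K}$ of the paper's Lemma \ref{LemEvaluation}), then compute $K(u,v)$ as the bilinear basis series and factor it into the theta sum over $n\in\Z^r$ and the Fock--Bargmann exponential over $k$; your explicit remark that $B$ is positive definite (not merely non-degenerate), which is what makes $\Im(F)=\frac{2\pi}{\nu}B^{-1}$ positive and the theta series convergent, is a worthwhile point the paper leaves implicit. The only piece you gloss over is the completeness of $\Fnugr(\VgC)$ as a Hilbert space, which the paper proves by a separate Cauchy-sequence argument (locally uniform convergence to a holomorphic limit satisfying \eqref{EqFct3m}, plus $L^2$ convergence on $\Lambda(\Gamma_r)$); this should be stated rather than subsumed under ``checking that the evaluation functionals are bounded,'' though it does follow from Theorem \ref{Thmbasis}(2).
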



\section{Proofs of main results}


To prove Theorem \ref{Thmbasis}, we establish first some needed intermediary results.
We begin with the following

\begin{lemma} Set
 $  \psiNu(u) = \psiNuU$ and $\psiAlpha(u):= \psiAlphaU $  for every $u=(z,z_\perp)\in \VgC$.
 Then, the holomorphic functions $\psiNu$ and $\psiAlpha$ satisfy the functional equations
  \begin{align}\label{fcteqnu}
 \psiNu(u+\gamma) = e^{\nu H(u+\frac{\gamma}{2},\gamma)} \psiNu(u)   \quad \mbox{and} \quad
 \psiAlpha(u+\gamma)= \chi_\alpha(\gamma) \psiAlpha(u),
 \end{align}
 and the nonzero holomorphic function  $\psiNuAlpha(u) :=\psiNu(u) \psiAlpha(u)$ belongs to $\Onugr(\VgC)$.
\end{lemma}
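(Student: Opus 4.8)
The plan is to verify the two functional equations separately and then obtain membership in $\Onugr(\VgC)$ by multiplying them, since holomorphy and nonvanishing of $\psiNuAlpha$ are immediate from the fact that it is the product of two everywhere-nonzero entire exponentials. Throughout I write $\gamma=m\in\Z^r$ in the $z$-coordinates, so that $\gamma=(m,0)$ under the splitting $u=(z,z_\perp)$ and translation by $\gamma$ sends $z\mapsto z+m$ while leaving $z_\perp$ unchanged.

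The equation for $\psiAlpha$ is purely formal. Using additivity of the pairing $\alpha z$ in $z$ and the definition \eqref{charact} of $\chi_\alpha$, one expands $\psiAlpha(u+\gamma)=e^{2\pi i\alpha(z+m)}=e^{2\pi i\alpha m}\,e^{2\pi i\alpha z}=\chi_\alpha(\gamma)\psiAlpha(u)$ directly.

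The substantive step is the equation for $\psiNu$, which depends only on the $\WrC$-component $z$. Using the bilinearity and symmetry of $B$ on $\WrC\times\WrC$ (as recorded after \eqref{Bform}), I would first expand $B(z+m,z+m)=B(z,z)+2B\!\left(z+\tfrac{m}{2},m\right)$. The key identity is then that this cross-term is exactly the hermitian exponent: from the decomposition \eqref{HHdecomp} one computes $H\!\left(u+\tfrac{\gamma}{2},\gamma\right)=B\!\left(z+\tfrac{m}{2},\overline{m}\right)+\scal{z_\perp,0}_{\C^{g-r}}=B\!\left(z+\tfrac{m}{2},m\right)$, the crucial point being that $\gamma$ is real (so $\overline{m}=m$) and carries no $z_\perp$-component. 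Combining the two displays gives $B(z+m,z+m)=B(z,z)+2H\!\left(u+\tfrac{\gamma}{2},\gamma\right)$, and multiplying by $\tfrac{\nu}{2}$ and exponentiating yields $\psiNu(u+\gamma)=e^{\nu H(u+\frac{\gamma}{2},\gamma)}\psiNu(u)$.

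Finally, multiplying the two functional equations produces $\psiNuAlpha(u+\gamma)=\chi_\alpha(\gamma)\,e^{\nu H(u+\frac{\gamma}{2},\gamma)}\,\psiNuAlpha(u)$, which is precisely the defining relation \eqref{FctEq1} for membership in $\Onugr(\VgC)$. There is no genuine obstacle: the entire content is the reduction of the hermitian exponent $H\!\left(u+\tfrac{\gamma}{2},\gamma\right)$ to the real symmetric form $B$ on $\WrC$, which is exactly what the isotropy of $\Gamma_r$ supplies through Lemma \ref{lem-Iso2}. The only point requiring care is tracking the reality of $\gamma$ and the vanishing of its orthogonal component, so that $H$ collapses cleanly onto $B$.
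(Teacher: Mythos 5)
Your proof is correct and follows essentially the same route as the paper: the paper simply cites Lemma \ref{lem-Iso2}(iii) (together with the decomposition \eqref{HHdecomp}) for the identity $B(z+m,z+m)=B(z,z)+2H\bigl(u+\tfrac{\gamma}{2},\gamma\bigr)$, whereas you re-derive that identity inline from bilinearity, symmetry, and the reality of $\gamma$. No gap.
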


\begin{proof} The first functional equation in \eqref{fcteqnu} follows making use of $(iii)$ of Lemma \ref{lem-Iso2}. 
The second one is immediate keeping in mind that $ \chi_\alpha(\gamma) = e^{2\pi i \alpha m}$, with $\gamma =(m_1,m_2,\cdots,m_r)=m$.
 Wherefor, the holomorphic function  $\psiNuAlpha(u) :=\psiNu(u) \psiAlpha(u)$ satisfies
   \begin{align}\label{fcteqnualpha}
 \psiNuAlpha(u+\gamma)= \chi_\alpha(\gamma) e^{\nu H(u+\frac{\gamma}{2},\gamma)} \psiNuAlpha(u).
 \end{align}
\end{proof}

\begin{proposition} \label{prop-description}
 The following assertions are equivalents
\begin{enumerate}
  \item[(i)] $f \in \Onugr(\VgC)$.
  \item[(ii)] There exists a holomorphic $\Gamma_r$-periodic function $f^*$ such that
         \begin{align}\label{fctGperiodic}
              f(u) = \psiNuAlpha(u) f^*(u) = \psiNuAlphaU f^*(u).
         \end{align}
  \item[(iii)] There exists a sequence of  complex numbers $(a_{n,k})_{(n,k)\in\Z^r \times (\Z^+)^{g-r}}$ such that $f$ can be expanded in series as
   \begin{align}\label{expansionzr}
     f(z,z_\perp):= \sum\limits_{(n,k)\in\Z^r \times (\Z^+)^{g-r}} a_{n,k} \psiNuAlphaUn z_\perp^{k} .
     \end{align}
\end{enumerate}
\end{proposition}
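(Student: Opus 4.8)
The plan is to prove the two equivalences $(i)\Leftrightarrow(ii)$ and $(ii)\Leftrightarrow(iii)$ separately, the first being purely formal and the second carrying the genuine analytic content. For $(i)\Leftrightarrow(ii)$ I would use the preceding lemma, which guarantees that $\psiNuAlpha$ is a \emph{nowhere-vanishing} holomorphic function lying in $\Onugr(\VgC)$, i.e. satisfying the automorphy relation \eqref{fcteqnualpha}. Given $f\in\Onugr(\VgC)$, set $f^*:=f/\psiNuAlpha$; this is holomorphic on all of $\VgC$ precisely because $\psiNuAlpha$ has no zeros. Substituting $u+\gamma$ and dividing the functional equation \eqref{FctEq1} for $f$ by that for $\psiNuAlpha$ cancels the common factor $\chi_\alpha(\gamma)e^{\nu H(u+\gamma/2,\gamma)}$, leaving $f^*(u+\gamma)=f^*(u)$ for every $\gamma\in\Gamma_r$; hence $f^*$ is $\Gamma_r$-periodic and \eqref{fctGperiodic} holds. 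Conversely, if $f=\psiNuAlpha f^*$ with $f^*$ holomorphic and $\Gamma_r$-periodic, then multiplying the periodicity of $f^*$ by \eqref{fcteqnualpha} shows at once that $f$ is holomorphic and obeys \eqref{FctEq1}, so $f\in\Onugr(\VgC)$.

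For $(ii)\Leftrightarrow(iii)$ the point is that $\Gamma_r$-periodicity means exactly $f^*(z+m,z_\perp)=f^*(z,z_\perp)$ for every $m\in\Z^r$, so $f^*$ descends through the holomorphic covering $(z,z_\perp)\mapsto(e^{2\pi iz_1},\dots,e^{2\pi iz_r},z_\perp)$ to a holomorphic function on $(\C^*)^r\times\C^{g-r}$. I would then invoke the multivariable Laurent--Taylor expansion on this product domain: the function admits a series $f^*(z,z_\perp)=\sum_{n\in\Z^r}\sum_{k\in(\Z^+)^{g-r}}a_{n,k}\,e^{2\pi inz}\,z_\perp^{k}$, with full Laurent range $n\in\Z^r$ in the $r$ periodic variables and only non-negative powers $k\in(\Z^+)^{g-r}$ in the $z_\perp$ variables, the latter because $f^*$ is entire in $z_\perp$. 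Multiplying by $\psiNuAlpha(u)=\psiNuAlphaU$ absorbs $e^{2\pi i\alpha z}$ into each term and turns $e^{2\pi inz}$ into the factor $\psiNuAlphaUn$, producing exactly \eqref{expansionzr}. The converse implication $(iii)\Rightarrow(ii)$ is immediate, since any series of the form \eqref{expansionzr} is $\psiNuAlpha$ times the double series defining a holomorphic $\Gamma_r$-periodic $f^*$.

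The main obstacle is the rigorous justification of the Laurent--Taylor expansion in several complex variables: one must produce the coefficients by the Laurent--Fourier integrals $c_n(z_\perp)=\int_{[0,1]^r}f^*(x,z_\perp)e^{-2\pi inx}\,dx$, verify that each $c_n$ depends holomorphically on the parameter $z_\perp$ by differentiation under the integral sign, expand each $c_n$ in its Taylor series $c_n(z_\perp)=\sum_k a_{n,k}z_\perp^k$, and establish the normal convergence on compacta that legitimizes reorganizing the iterated Fourier and Taylor series into the single expansion \eqref{expansionzr}. The $L^2$-integrability is deliberately not addressed here; it is the separate growth condition on the sequence $(a_{n,k})$ that will be extracted in the proof of Theorem \ref{Thmbasis}.
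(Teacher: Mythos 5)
Your proposal is correct and follows essentially the same route as the paper: the equivalence $(i)\Leftrightarrow(ii)$ by dividing the functional equations using the nowhere-vanishing function $\psiNuAlpha$, and $(ii)\Leftrightarrow(iii)$ by Fourier expansion $f^*(z,z_\perp)=\sum_{n\in\Z^r}\psi_n(z_\perp)e^{2i\pi nz}$ in the periodic variables followed by Taylor expansion of the holomorphic coefficients $\psi_n$ in $z_\perp$. Your reformulation via the covering $(z,z_\perp)\mapsto(e^{2\pi iz_1},\dots,e^{2\pi iz_r},z_\perp)$ and the explicit mention of the convergence issues merely make precise the expansion that the paper states without proof.
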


\begin{proof}
The equivalence $(i) \Longleftrightarrow (ii)$ follows immediately making use of \eqref{fcteqnualpha},
$$\chi_\alpha(\gamma) e^{\nu H(u+\frac{\gamma}{2},\gamma)} =\frac{\psiNuAlpha(u+\gamma)}{\psiNuAlpha(u)}.$$
Indeed, the functional equation
$  f(u+\gamma) = \chi_\alpha(\gamma) e^{\nu H(u+\frac{\gamma}{2},\gamma)} f(u)$
becomes equivalent to
$$  f^{*}(u+\gamma) =  \frac{f(u+\gamma)}{\psiNuAlpha(u+\gamma)} = \frac{f(u)}{\psiNuAlpha(u)} = f^{*}(u).$$
Whence $f(u)= \psiNuAlpha(u) f^{*}(u)$ with $f^{*}$ is a holomorphic $\Gamma_r$-periodic function. 

The proof of $(ii) \Longleftrightarrow (iii)$ lies essentially on the fact that any holomorphic function in $(z,z_\perp)$ and
$\Gamma_r$-periodic in $z$ can be expanded as 
$$f^{*}(z,z_\perp) =  \sum\limits_{n\in\Z^r} \psi_n(z_\perp) e^{ 2i\pi n z}.$$
The series converges absolutely and uniformly on compact subsets of $\C^r \times\{z_\perp\}$ for every fixed $z_\perp\in\C^{g-r}$, 
while the Fourier coefficients
$\psi_n(z_\perp)$; $n\in\Z^r$, are holomorphic in $z_\perp$. Accordingly, they can be written as
       $$\psi_n(z_\perp)=\sum\limits_{k\in(\Z^+)^{g-r}} a_{n,k}z_{\perp}^k  $$
for some sequence of  complex numbers $(a_{n,k})_{k\in(\Z^+)^{g-r}}$. 
This gives rise to \eqref{expansionzr}.
\end{proof}

Consequently, the holomorphic functions
 \begin{align}\label{fctbasis}
\enk(z,z_\perp):= \psiNuAlphaUn z_\perp^{k} , 
 \end{align}
  for varying $n\in\Z^r$ and $ k\in(\Z^+)^{g-r}$, belong to $\Onugr(\VgC)$. Moreover, we assert the following

\begin{proposition}\label{prop-orthog}
  The functions $\enk(z,z_\perp)$ in \eqref{fctbasis} are orthogonal in $\Fnugr(\VgC)$, with
       \begin{align}\label{norm-def3r}
 \normnur{\enk}^2 = \left(\frac{1}{\sqrt{\det B}}\right) \left(\dfrac{\pi}{2\nu}\right)^{r/2} \left(\dfrac{\pi}{\nu}\right)^{g-r} 
  \left(\dfrac{k!}{\nu^{|k|}}\right)  e^{\frac{2\pi^2}{\nu}(n+\alpha)B^{-1} (\alpha+n)}.
     \end{align}
\end{proposition}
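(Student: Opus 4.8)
The plan is to evaluate the Hermitian inner product $\scalnuer{\enk,\enkp}$ directly and exhibit its factorization along the splitting $\Lambda(\Gamma_r)\equiv([0,1]\times\R)^r\times\C^{g-r}$. By \eqref{HHdecomp} the weight factors as $e^{-\nu H(u,u)}=e^{-\nu B(z,\overline{z})}\,e^{-\nu\scal{z_\perp,z_\perp}_{\C^{g-r}}}$, and the integrand of $\scalnuer{\enk,\enkp}$ splits correspondingly into a function of $z$ times the transverse product $z_\perp^{k}\,\overline{z_\perp^{k'}}$. The transverse factor is the classical Fock--Bargmann computation: the one-variable moments $\int_{\C}z_j^{k_j}\overline{z_j^{k_j'}}\,e^{-\nu|z_j|^2}\,\mes=\delta_{k_j,k_j'}\,(\pi/\nu)(k_j!/\nu^{k_j})$ yield orthogonality in $k$ together with the factor $(\pi/\nu)^{g-r}(k!/\nu^{|k|})$.

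The main work is the longitudinal factor. Writing $z=x+iy$ with $x,y\in\R^r$, I would first simplify the $z$-dependent exponent $\tfrac{\nu}{2}B(z,z)+2\pi i(\alpha+n)z+\tfrac{\nu}{2}\overline{B(z,z)}-2\pi i(\alpha+n')\overline{z}-\nu B(z,\overline{z})$. Using bilinearity and the (real) symmetry of $B$ together with $z-\overline{z}=2iy$, the quadratic part collapses to $\tfrac{\nu}{2}B(z-\overline{z},z-\overline{z})=-2\nu B(y,y)$, which depends on $y$ alone, while the linear part reorganizes as $2\pi i(n-n')x-2\pi(2\alpha+n+n')y$. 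Consequently the $x$-integral over $[0,1]^r$ is $\int_{[0,1]^r}e^{2\pi i(n-n')x}\,dx=\delta_{n,n'}$ by orthogonality of integer frequencies, forcing orthogonality in $n$.

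For $n=n'$ the surviving $y$-integral is the Gaussian $\int_{\R^r}e^{-2\nu B(y,y)-4\pi(\alpha+n)y}\,dy$, whose convergence is guaranteed since $B$ is positive definite, being the Gram matrix of the positive definite $H$ on the $\C$-linearly independent $\omega_1,\dots,\omega_r$ (Lemma \ref{lem-Iso1}(iv)). Completing the square via the shift $y\mapsto y+\tfrac{\pi}{\nu}B^{-1}(\alpha+n)$ extracts the characteristic factor $e^{\frac{2\pi^2}{\nu}(\alpha+n)B^{-1}(\alpha+n)}$ and reduces the integral to $\int_{\R^r}e^{-2\nu B(y,y)}\,dy=(\pi/2\nu)^{r/2}(\det B)^{-1/2}$ by the standard multivariate Gaussian formula. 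Multiplying the longitudinal and transverse factors gives exactly \eqref{norm-def3r}.

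The step I expect to be the main obstacle is the algebraic reorganization of the exponent, namely combining the holomorphic term $\tfrac{\nu}{2}B(z,z)$, its antiholomorphic conjugate, and the weight $-\nu B(z,\overline{z})$ so that the real part $x$ survives only in the oscillatory factor $e^{2\pi i(n-n')x}$ (which produces the Kronecker delta in $n$) while $y$ carries the Gaussian. The identity $\tfrac{\nu}{2}B(z-\overline{z},z-\overline{z})=-2\nu B(y,y)$ and the careful bookkeeping of the mixed linear terms are where sign and factor errors most easily arise; everything downstream is a routine completion of the square and Gaussian evaluation.
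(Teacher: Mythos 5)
Your proposal is correct and follows essentially the same route as the paper's proof: Fubini splitting along $([0,1]\times\R)^r\times\C^{g-r}$, the standard Fock--Bargmann moments for the $z_\perp$ factor, the collapse of the quadratic exponent to $-2\nu B(y,y)$ with the Kronecker delta in $n$ coming from the $x$-integral, and the Gaussian evaluation in $y$ (you complete the square by hand where the paper quotes the multivariate Gaussian formula, which is the same computation). Your explicit remark that $B$ is positive definite, being the Gram matrix of $H$ on the $\C$-independent $\omega_1,\dots,\omega_r$, is a small but welcome addition that the paper leaves implicit when invoking convergence.
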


\begin{proof} From \eqref{HHdecomp} and making use of the Fubini's theorem, we can write
 \begin{align*}
\scalnuer{\enk, \enkp}
&= \left(\int_{([0,1]\times \R)^r} e^{\frac{{{\nu}} }{2} (B(z,z) + B(\overline{z},\overline{z}) -2 \widetilde{H}(z,z) )
 + 2i\pi \alpha(z - \overline{z}) + 2i\pi (n z - n' \overline{z} ) }   \mes(z)\right)
   \\ &\times   \left(\int_{\C^{g-r}}  z_{\perp}^{k} \overline{z_{\perp}}^{k'} e^{-\nu |z_\perp|^2_{\C^{g-r}} }\mes(z_\perp) \right)
\end{align*}
The second integral in the right hand-side of the last identity is well known. It can be obtained by means of Fubini's theorem and polar coordinates. 
Explicitly, we have
$$  \int_{\C^{g-r}}  z_{\perp}^{k} \overline{z_{\perp}}^{k'} e^{-\nu|z_\perp|^2_{\C^{g-r}}} \mes(z_\perp)
= \delta_{k,k'}  \left(\dfrac{\pi}{\nu}\right)^{g-r}  \left(\dfrac{k!}{\nu^{|k|}}\right).$$
For $z= x+iy$ with $x,y\in \R^r$, we have $n z - n' \overline{z} = (n  - n') x + i (n+n') y $.
Note also that since $\widetilde{H}(z,z) = B(z,\overline{z})$, it follows
 $$B(z,z) + B(\overline{z},\overline{z}) -2 \widetilde{H}(z,z)
 = B(z-\overline{z}, z - \overline{z}) = - 4 B(y, y).$$
Whence, we get
 \begin{align}
\scalnuer{\enk,\enk}
&= \delta_{k,k'} \left(\int_{([0,1])^r} e^{2i\pi  (n  - n') x  } dx \right)  \nonumber
 \\& \times   \left(\int_{\R^r} e^{ - 2\nu \widetilde{H}(y, y)  - 2\pi (2\alpha+ n + n') y }   dy\right) \left(\dfrac{\pi}{\nu}\right)^{g-r}  
 \left(\dfrac{k!}{\nu^{|k|}}\right)      \nonumber
   \\ &= \delta_{n,n'} \delta_{k,k'}  \left(\int_{\R^r} e^{ - 2\nu B(y, y)  - 4\pi (\alpha+n)y}   dy\right)
     \left(\dfrac{\pi}{\nu}\right)^{g-r}  \left(\dfrac{k!}{\nu^{|k|}}\right) \label{beforeGauss}
\end{align}
thanks to the well-known fact $ \int_{[0,1]^r} e^{2i\pi  (n  - n') x  } dx =  \delta_{n,n'}$.
One recognizes in the last integral the Gaussian integral \cite[p. 256]{Folland2004}
 \begin{align} \label{gaussIntegral}
  \int_{\R^r} e^{-a y A y + by} dy = \left(\frac{1}{\sqrt{\det A}}\right)\left(\dfrac{\pi}{a}\right)^{r/2}  e^{\frac{1}{4 a} b A^{-1} b },
  \end{align}
 where we have the limitation for $a>0$, $b\in \C^r$ and $A\in \C^{r\times r}$ a symmetric $r$-matrix whose $\Re(A)$ is positive definite. 
 Thus, by inserting \eqref{gaussIntegral}, for $a= 2\nu$, $b=- 4\pi (\alpha+n)$ and $A=B$ in \eqref{beforeGauss},  we get
 \begin{align*}
\scalnuer{\enk,\enk}
= \delta_{n,n'} \delta_{k,k'}\left(\frac{1}{\sqrt{\det B}}\right)
\left(\dfrac{\pi}{2\nu}\right)^{r/2} \left(\dfrac{\pi}{\nu}\right)^{g-r}  \left(\dfrac{k!}{\nu^{|k|}}\right) 
e^{\frac{2\pi^2}{\nu}(n+\alpha)B^{-1} (\alpha+n)}.
\end{align*}
\end{proof}

Now, we are able to prove Theorem \ref{Thmbasis}.

\begin{proof}[Proof of Theorem \ref{Thmbasis}]
In view of Propositions \ref{prop-description} and \ref{prop-orthog}, the functions $\enk$; 
are generators of $\Fnugr(\VgC)$ and pairewise orthogonal. We only need to prove that any $f \in \Fnugr(\VgC)$ such that 
$ \scalnuer{f,\enk} = 0$, for all $(n,k)$,  is identically zero. This follows since the expansion  series of $f$, given through \eqref{expansionzr},
converges uniformly on the compact sets, including $K_R:= ([0,1]\times[-R,R])^r\times B_R^{g-r}$ that recovers $\Lambda(\Gamma_r)$, 
 where $R>0$ and $B_R^{g-r}:=\{ z_\perp \in \C^{g-r}; \scal{z_\perp,z_\perp}_{\C^{g-r}} \leq R^2\}$. 
 Thus, one can exchange the series with the integration over subsets of $K_R$. Precisely, we have
\begin{align*}
\scalnuer{f,\enk} &=  \int_{\Lambda(\Gamma_r)} \left(\sum\limits_{(n',k')\in\Z^r \times (\Z^+)^{g-r}} a_{n',k'} \enkp(u) \right)
\overline{\enk(u)} e^{-\nu H(u,u)} \mes(u) \\
&= \lim_{R\to +\infty} \int_{K_R\cap \Lambda(\Gamma_r)} \left(\sum\limits_{(n',k')\in\Z^r \times (\Z^+)^{g-r}} a_{n',k'} \enkp(u) \right)
\overline{\enk(u)} e^{-\nu H(u,u)} \mes(u) \\
&= \lim_{R\to +\infty} \sum\limits_{(n',k')\in\Z^r \times (\Z^+)^{g-r}} a_{n',k'}
\int_{K_R}  \enkp(u) \overline{\enk(u)} e^{-\nu H(u,u)} \mes(u) \\
&= \lim_{R\to +\infty} a_{n,k}
\int_{K_R}  |\enk(u)|^2  e^{-\nu H(u,u)} \mes(u)\\
&= a_{n,k} \normnur{\enk}^2.
\end{align*}
Therefore, $a_{n,k} = 0$ for all $(n,k)$. This proves $(i)$ of Theorem \ref{Thmbasis}.

To prove $(ii)$, we note first that any $f \in \Onugr(\VgC)$ expands as
   \begin{align*}
     f(z,z_\perp)= \sum\limits_{(n,k)\in\Z^r \times (\Z^+)^{g-r}} a_{n,k}  \enk(z,z_\perp)
     \end{align*}
     for some sequence of complex numbers $(a_{n,k})_{(n,k)\in\Z^r \times (\Z^+)^{g-r}}$ (Proposition \ref{prop-description}).
Whence, $f$ belongs to $\Fnugr(\VgC)$ if and only if $\normnur{f}<+\infty$. In the other hand, we have
\begin{align}
\normnur{f}^2 &=  \sum\limits_{(n,k),(n',k')\in\Z^r \times (\Z^+)^{g-r}} a_{n,k}\overline{a_{n,k}} \scalnuer{\enk, \enkp} \nonumber\\
 &= \sum\limits_{(n,k)\in\Z^r \times (\Z^+)^{g-r}} |a_{n,k}|^2  \normnur{\enk}^2, \label{normfproof}
      \end{align}
 where $\normnur{\enk}$ is explicitly given by \eqref{normenk}.
\end{proof}

Now, in order to prove Theorem \ref{ThmRKHS}, we establish first the following

\begin{lemma}\label{LemEvaluation}
Define
\begin{align}\label{Kzz}
\widetilde{K}(z,z_\perp) = \sqrt{\det B } \left(\dfrac{2\nu}{\pi}\right)^{r/2} \left(\dfrac{\nu}{\pi}\right)^{g-r}
e^{\frac{\nu}{2}\left( B(z,z) + \overline{B(z,z)} \right) }
\Theta_{\alpha,0}\left( z-\overline{z} \bigg | \frac{2\pi i}{\nu} B^{-1} \right) e^{\nu |z_{\perp}|^{2}_{\C^{g-r} } }
\end{align}
for every $(z,z_\perp)\in \VgC$. Then, the following estimation
\begin{align}\label{Estimation}
|f(z,z_\perp)|  \leq  \sqrt{\widetilde{K}(z,z_\perp)}  \normnur{f}
\end{align}
holds for every $f\in\Fnugr(\VgC)$ and every $(z,z_\perp)\in \VgC $.
Moreover, for every compact set $K \subset \VgC$, there exists a constant $C_{K} \geq 0 $ such that
\begin{align}\label{EstimationK}
|f(z,z_\perp)|\leq C_K \normnur{f} \quad\quad;(z,z_\perp)\in K .
\end{align}
\end{lemma}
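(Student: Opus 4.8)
The plan is to identify $\widetilde{K}(z,z_\perp)$ with the diagonal sum $\sum_{n,k} |\enk(z,z_\perp)|^2/\normnur{\enk}^2$ taken over the orthogonal basis provided by Theorem \ref{Thmbasis}, and then to read off \eqref{Estimation} from the Cauchy--Schwarz inequality applied to the basis expansion of $f$. First I would expand an arbitrary $f\in\Fnugr(\VgC)$ as $f=\sum_{n,k}a_{n,k}\enk$, so that $\normnur{f}^2=\sum_{n,k}|a_{n,k}|^2\normnur{\enk}^2$ by part (2) of Theorem \ref{Thmbasis}. Evaluating at a fixed point $(z,z_\perp)$ and regrouping each summand as $\big(a_{n,k}\normnur{\enk}\big)\cdot\big(\enk(z,z_\perp)/\normnur{\enk}\big)$, Cauchy--Schwarz yields
\[
|f(z,z_\perp)|^2\;\leq\;\Big(\sum_{n,k}|a_{n,k}|^2\normnur{\enk}^2\Big)\Big(\sum_{n,k}\frac{|\enk(z,z_\perp)|^2}{\normnur{\enk}^2}\Big)\;=\;\normnur{f}^2\sum_{n,k}\frac{|\enk(z,z_\perp)|^2}{\normnur{\enk}^2},
\]
so that \eqref{Estimation} reduces to the claimed identity $\sum_{n,k}|\enk(z,z_\perp)|^2/\normnur{\enk}^2=\widetilde{K}(z,z_\perp)$.

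The core computation is the evaluation of this diagonal sum. Using $\enk(z,z_\perp)=e^{\frac{\nu}{2}B(z,z)+2i\pi(\alpha+n)z}z_\perp^k$ together with $|e^{\frac{\nu}{2}B(z,z)}|^2=e^{\frac{\nu}{2}(B(z,z)+\overline{B(z,z)})}$ and $|e^{2i\pi(\alpha+n)z}|^2=e^{2i\pi(\alpha+n)(z-\overline{z})}$ (the latter valid because $\alpha+n\in\R^r$), and inserting the explicit norm \eqref{norm-def3r} from Proposition \ref{prop-orthog}, the sum factorizes across the indices $n$ and $k$. The $k$-part collapses via the product expansion of the exponential,
\[
\sum_{k\in(\Z^+)^{g-r}}\frac{\nu^{|k|}}{k!}\,|z_\perp^k|^2\;=\;\prod_{j=r+1}^{g}e^{\nu|z_j|^2}\;=\;e^{\nu\scal{z_\perp,z_\perp}_{\C^{g-r}}},
\]
while the $n$-part becomes $\sum_{n\in\Z^r}e^{-\frac{2\pi^2}{\nu}(n+\alpha)B^{-1}(n+\alpha)+2i\pi(\alpha+n)(z-\overline{z})}$, which is exactly $\Theta_{\alpha,0}\!\big(z-\overline{z}\,\big|\,\tfrac{2\pi i}{\nu}B^{-1}\big)$: substituting $F=\tfrac{2\pi i}{\nu}B^{-1}$ in \eqref{defTheta} turns the quadratic term $2i\pi\cdot\tfrac12(\alpha+n)F(\alpha+n)$ into $-\tfrac{2\pi^2}{\nu}(\alpha+n)B^{-1}(\alpha+n)$. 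Collecting the constant prefactor $\sqrt{\det B}\,(2\nu/\pi)^{r/2}(\nu/\pi)^{g-r}$ then reproduces \eqref{Kzz} verbatim.

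For the local estimate \eqref{EstimationK} I would use continuity of $\widetilde{K}$. Since $H$ is positive definite on $\WrC$, the real symmetric matrix $B$ is positive definite, hence $\Im\big(\tfrac{2\pi i}{\nu}B^{-1}\big)=\tfrac{2\pi}{\nu}B^{-1}$ is positive definite; the defining theta series then converges locally uniformly, so $\widetilde{K}$ is continuous on $\VgC$. Consequently $C_K:=\sup_{(z,z_\perp)\in K}\sqrt{\widetilde{K}(z,z_\perp)}$ is finite on any compact $K$, and \eqref{EstimationK} follows from \eqref{Estimation}.

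I expect the only genuinely delicate point to be the bookkeeping in the $n$-sum: matching the Gaussian exponent $\tfrac{2\pi^2}{\nu}(n+\alpha)B^{-1}(n+\alpha)$ coming from the norm \eqref{norm-def3r} against the quadratic and linear terms of the theta function, so that the characteristic $\alpha$ and the period matrix $\tfrac{2\pi i}{\nu}B^{-1}$ emerge with the correct signs and factors. The pointwise finiteness of $\sum_{n,k}|\enk(z,z_\perp)|^2/\normnur{\enk}^2$, needed to make the Cauchy--Schwarz step meaningful, is automatic from the same convergence of the theta and exponential series, so no separate convergence argument is required.
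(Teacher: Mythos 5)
Your proposal is correct and follows essentially the same route as the paper: expand $f$ in the orthogonal basis, apply Cauchy--Schwarz to bound $|f(z,z_\perp)|^2$ by $\normnur{f}^2$ times the diagonal sum $\sum_{n,k}|\enk(z,z_\perp)|^2/\normnur{\enk}^2$, identify that sum with $\widetilde{K}(z,z_\perp)$ via the theta series with $F=\tfrac{2\pi i}{\nu}B^{-1}$, and conclude \eqref{EstimationK} from continuity of $\widetilde{K}$ on compact sets. All the bookkeeping (the exponent matching, the factorization of the $k$-sum, the positive definiteness of $\Im(F)$) agrees with the paper's computation.
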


\begin{proof}
From the expansion series \eqref{expansionzr} of a given $f\in \Fnugr(\VgC)$, we get
\begin{align*}
 |f(z,z_\perp)| & \leq  \sum\limits_{(n,k)\in\Z^r\times (\Z^+)^{g-r}} |a_{n, k}| | \enk(z,z_\perp) | \\
                & \leq  \sum\limits_{(n,k)\in\Z^r\times (\Z^+)^{g-r}}\left( |a_{n, k}| \normnur{\enk} \right)
                \left(\frac{| \enk(z,z_\perp) |}{\normnur{\enk}}\right).
\end{align*}
Now, by means of the Cauchy-Schwarz inequality, it follows
\begin{align*} 
 |f(z,z_\perp)|  \leq  \left(\sum\limits_{(n,k)\in\Z^r\times (\Z^+)^{g-r}} |a_{n, k}|^2 \normnur{\enk}^2 \right)^{\frac{1}{2}}
  \left( \sum\limits_{(n,k)\in\Z^r\times (\Z^+)^{g-r}} \frac{| \enk(z,z_\perp) |^2}{\normnur{\enk}^2}\right)^{\frac{1}{2}}.
 \end{align*}
 The first term in the right hand side of the previous inequality is exactly the norm of $f$ (see \eqref{normfproof}). 
 The second one can be written in terms of the Riemann theta function $ \Theta_{\alpha,\beta} (z \big |  F)$ defined in 
 \eqref{defTheta} with $F=({2\pi i}/{\nu}) B^{-1}$ and $Im(F)=Re\left(({2\pi}/{\nu}) B^{-1}\right) >0$.
Indeed, from \eqref{fctbasis} and \eqref{norm-def3r}, we get
 \begin{align*}
  \sum_{(n,k)\in\Z^r\times (\Z^+)^{g-r}}   \frac{| \enk(z,z_\perp) |^2}{\normnur{\enk}^2}
     = \widetilde{K}(z,z_\perp) ,
\end{align*}
 where $\widetilde{K}(z,z_\perp)$ is as defined in \eqref{Kzz}.
Finally, we get $ |f(z,z_\perp)|  \leq  \sqrt{\widetilde{K}(z,z_\perp)}  \normnur{f}$.
The estimation \eqref{Estimation} follows since the involved function $\widetilde{K}(z,z_\perp)$ is continuous on $\VgC$, 
and therefore bounded on any compact set $K \subset \VgC$. We have $|f(z,z_\perp)|\leq C_K \normnur{f}$; $(z,z_\perp)\in K$, for certain constant $C_K$.
\end{proof}

With this we can handle the proof of Theorem \ref{ThmRKHS}.

\begin{proof}[Proof of Theorem \ref{ThmRKHS}]
The proof of $\Fnugr(\VgC)$ being a Hilbert space is classical, thanks to \eqref{EqFct3m} in Lemma \ref{LemEvaluation}.
 For completeness, we offer here a proof of it. Let  $(f_p)_p $ be a Cauchy sequence in $\Fnugr(\VgC)$.  Then, by means of 
 \eqref{EstimationK} for $f_p - f_q\in \Fnugr(\VgC)$, for every compact set $K \subset \VgC $  there exists certain constant $ C_K$ such that
$$ | f_p(z,z_\perp)- f_q (z,z_\perp)| \leq C_K\normnur{f_p- f_q } ,$$
for  any $(z,z_\perp)\in K $.
Therefore, the sequence $(f_p)_p$ of entire functions is uniformly
Cauchy on compact subsets, and then is uniformly convergent to an holomorphic function $f$ on the
whole $\VgC$. Furthermore, the limit function $f$ satisfies the same functional equation \eqref{EqFct3m} as $f_p$. 
To conclude, we need only to prove that $\normnur{f}< +\infty $.
 For this, notice that $(f_p)_p$ is also a Cauchy sequence in the Hilbert space 
 $ L^{2,\nu}_H(\Lambda(\Gamma_r)) := L^2(\Lambda(\Gamma_r); e^{- \nu H(u,u)} \mes(u))$ and whence
  $(f_p)_p$ converges to a function $\phi_{\Lambda(\Gamma_r)}\in L^{2,\nu}_H(\Lambda(\Gamma_r))  $ 
  in the norm of $L^{2,\nu}_H(\Lambda(\Gamma_r))$. Thus, there exists a subsequence $( f_{p_k} )_k$ of $(f_p)_p$ converging to 
  $\phi_{\Lambda(\Gamma_r)}$ almost everywhere on $\Lambda(\Gamma_r)$. Thence,
$f |\Lambda(\Gamma_r) = \phi_{\Lambda(\Gamma_r)}\in L^{2,\nu}_H(\Lambda(\Gamma_r)) $ almost everywhere on $\Lambda(\Gamma_r)$,  
and therefore $\normnur{f} = \normnur{\phi_{\Lambda(\Gamma_r)}} <+\infty$.

Now, according to Lemma \ref{LemEvaluation}, the point evaluation functional is continuous,  in $\Fnugr(\VgC)$ at the points of $\VgC$,
so that $\Fnugr(\VgC)$ is a reproducing kernel Hilbert space by Riesz representation theorem. Its reproducing kernel function can be determined explicitly.
Indeed, starting from the fact that
$$
K(u,v) =   \sum_{(n,k)\in\Z^r\times (\Z^+)^{g-r}}   \frac{\enk(z,z_\perp) \overline{ \enk(w,w_\perp)} }{\normnur{\enk}^2}
$$
for every $u=(z,z_\perp); v= (w,w_\perp)\in \VgC$, we obtain
\begin{align*}
K(u,v) &= \sqrt{\det B } \left(\dfrac{2\nu}{\pi}\right)^{r/2} \left(\dfrac{\nu}{\pi}\right)^{g-r}
e^{\frac{\nu}{2}\left( B(z,z) + \overline{B(w,w)} \right) }
 \\& \left(\sum\limits_{n\in\Z^r} e^{-\frac{2\pi^2}{\nu}(\alpha+n)B^{-1} (\alpha+n) + 2i\pi (\alpha+n)(z-\overline{w}) }\right)
 \left( \sum\limits_{k\in (\Z^+)^{g-r}}  \dfrac{\nu^{|k|}}{k!}z_{\perp}^k\overline{w_\perp^k} \right)  \\
 &= \sqrt{\det B} \left(\dfrac{2\nu}{\pi}\right)^{r/2} \left(\dfrac{\nu}{\pi}\right)^{g-r} e^{\frac{\nu}{2}\left( B(z,z) + \overline{B(w,w)} \right) }
\Theta_{\alpha,0}\left( z-\overline{w} \bigg | \frac{2\pi i}{\nu} B^{-1} \right)   e^{\nu\scal{z_\perp,w_\perp}_{\C^{g-r}} }   .
\end{align*}
\end{proof}


\end{document}